\newtheorem{dfn}{Definition}
\newtheorem{thm}{Theorem}
\newtheorem{prp}{Property}
\newtheorem{prop}{Proposition}
\newtheorem{cor}{Corollary}
\newtheorem{prb}{Problem}
\journal{}
\begin{document}

\begin{frontmatter}

\title{The difference between several metric dimension graph invariants}

\author{Milica Milivojevi\'c Danas\fnref{kg}}
\ead{milica.milivojevic@kg.ac.rs}

\address[kg]{Faculty of Science and Mathematics, University of Kragujevac, Radoja Domanovi\' ca 12, Kragujevac, Serbia }

\begin{abstract}
In this paper extremal values of the difference between several graph invariants related to the metric dimension are studied: mixed metric dimension,
edge metric dimension and strong metric dimension. These non-trivial extremal values are computed over all connected graphs of given order. To obtain such
extremal values  several techniques are developed. They use functions related to metric dimension graph invariants to obtain lower and/or upper bounds on
these extremal values and exact computations when restricting to some specific families of graphs.
\end{abstract}

\begin{keyword}
mixed metric dimension \sep edge metric dimension \sep strong metric dimension \sep extremal graph theory
\end{keyword}

\end{frontmatter}


\section{Introduction}

If we analyze two graph invariants $\xi_1(G)$ and $\xi_2(G)$, the following question naturally arises:

\begin{prb} \label{pd1} Can the difference between the $\xi_1(G)$ and $\xi_2(G)$ be arbitrarily large?
\end{prb}

This question can be formally presented by introducing function $(\xi_1 - \xi_2)(n)$:

\begin{dfn} \label{dd1} $(\xi_1 - \xi_2)(n)$ is the maximum value of $\xi_1(G) - \xi_2(G)$ over all
graphs $G$ of order $n$.
\end{dfn}

It is easy to see that previous definition is enough, since the mininum value of $\xi_1(G) - \xi_2(G)$ over all
graphs $G$ of order $n$ is equal to $ - (\xi_2 - \xi_1)(n)$.

Describing all results from the literature about extremal difference between graph invariants isn't in this paper scope in full.
It is mentioned only several of differences:
\begin{itemize}
\item Incidence dimension and 2-packing number in graph \cite{dra18};
\item Metric dimension and determining number of a graph \cite{cac10,gar14};
\item Locating-domination number and determining number of a graph \cite{gar14}.
\end{itemize}

\subsection{Metric dimension}

The metric dimension graph invariant was introduced independently by Slater (1975) in \cite{metd1} and Harary and Melter (1976) in \cite{metd2}.
This NP-hard graph invariant (\cite{metdnp}) has been widely investigated in last 55 years and it has applications in many diverse areas.

Given a simple connected undirected graph $G$ with vertex set $V(G)$ and edge set $E(G)$, where $d(u,v)$ denotes the distance between vertices $u$ and $v$, i.e. the length of a shortest $u-v$ path. Then, metric dimension can be defined as:

\begin{dfn} \label{dimd} A vertex $w$ resolves two vertices $u$ and $v$ if $d(u,w) \ne d(v,w)$. A vertex set $S$ of $G$ is a {\em resolving set} of $G$ if every two distinct vertices from $V(G)$ are resolved by some vertex of $S$. Metric basis is resolving set of minimal cardinality, and metric dimension of graph G, denoted as $\beta(G)$, is cardinality of metric basis.
\end{dfn}

Further on, we will present several NP-hard graph invariants based on metric dimension.

\subsection{Strong metric dimension}

The strong metric dimension graph invariant was introduced by Sebo and Tannier \cite{sdim1}:

\begin{dfn} \mbox{\rm(\cite{sdim1})} \label{sdimd} A vertex $w$ strongly resolves two vertices $u$ and $v$ if $u$ belongs to a shortest $v-w$ path or $v$ belongs to a shortest $u-w$ path. A vertex set $S$ of $G$ is a {\em strong resolving set} of $G$ if every two distinct vertices from $V(G) \setminus S$ are strongly resolved by some vertex of $S$.
Strong metric basis is strong resolving set of minimal cardinality, and strong metric dimension of graph G, denoted as $\beta_S(G)$, is cardinality of strong metric basis.
\end{dfn}

The following  property describes link between strong resolving set and resolving set.

\begin{prp} \mbox{\rm(\cite{sdim1})} \label{sgeq}
Each strong resolving set is also a resolving set implying $\beta_S(G)\geq \beta(G)$.
\end{prp}

The following definition of mutually maximally distant vertices and two
properties from the literature will also be used in next section for obtaining lower bound for strong metric dimension.

\begin{dfn} \mbox{\rm(\cite{kra12})} \label{dmmd} A pair of vertices $u,v \in V$, $u \ne v$, is {\em mutually maximally distant}
if and only if
 \begin{itemize}
    \item $d(w,v) \leq d(u,v)$ for each $w \in N(u)$  and
    \item  $d(u,w) \leq d(u,v)$ for each $w \in N(v)$.
 \end{itemize}
\end{dfn}

\begin{prop} \mbox{\rm(\cite{kra12})} \label{smmd} If $S \subset V$ is a strong resolving set of graph $G$, then,
for every two maximally distant vertices $u,v \in V$, it must be $u
\in S$ or $v \in S$.\end{prop}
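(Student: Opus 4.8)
The plan is to argue by contradiction. Suppose $u,v$ is a mutually maximally distant pair (the statement's ``maximally distant'' is to be read in the sense of Definition~\ref{dmmd}) but that neither endpoint lies in $S$, i.e.\ $u,v \in V \setminus S$. Since $S$ is a strong resolving set, some vertex $w \in S$ must strongly resolve $u$ and $v$; by Definition~\ref{sdimd} this means that either $u$ lies on a shortest $v$--$w$ path or $v$ lies on a shortest $u$--$w$ path. Note first that $w \neq u$ and $w \neq v$, because $w \in S$ while $u,v \notin S$.

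First I would treat the case where $u$ lies on a shortest $v$--$w$ path. Since $u$ is a vertex of this path and $u \neq w$, there is a next vertex $u'$ along the path from $u$ toward $w$; this $u'$ is a neighbor of $u$, so $u' \in N(u)$. Because every subpath of a shortest path is itself shortest and $u$ lies on a shortest $v$--$w$ path, the subpath from $v$ to $u'$ is shortest, giving $d(v,u') = d(v,u)+1$. Hence $d(u',v) = d(u,v)+1 > d(u,v)$ with $u' \in N(u)$, which directly violates the first condition of Definition~\ref{dmmd}, namely $d(w',v) \le d(u,v)$ for every $w' \in N(u)$.

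The second case, where $v$ lies on a shortest $u$--$w$ path, is handled by the symmetric argument: the neighbor $v'$ of $v$ one step toward $w$ satisfies $d(u,v') = d(u,v)+1 > d(u,v)$, contradicting the second condition of Definition~\ref{dmmd}. In either case we reach a contradiction, so the assumption $u,v \notin S$ is untenable, and therefore at least one of $u,v$ belongs to $S$.

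I expect the only delicate point to be the justification that stepping one vertex along the shortest $v$--$w$ path toward $w$ strictly increases the distance to $v$; this rests on the elementary fact that every subpath of a shortest path is itself shortest, together with the observation $u \neq w$ that guarantees such a next vertex actually exists. Everything else is routine bookkeeping on the two symmetric cases.
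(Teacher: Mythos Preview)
Your argument is correct. The contradiction is set up properly: once $u$ lies on a shortest $v$--$w$ path with $u\neq w$, the neighbor $u'$ of $u$ one step toward $w$ is on the same shortest path, so the subpath $v\to\cdots\to u\to u'$ is itself shortest and $d(v,u')=d(v,u)+1$, violating the first clause of Definition~\ref{dmmd}. The symmetric case is identical.

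There is nothing to compare against in the paper itself: Proposition~\ref{smmd} is merely quoted from \cite{kra12} and used as a black box in Step~3 of the proof of Theorem~\ref{smn8t}; no proof is given here. Your argument is in fact the standard one found in the original source, so you have reconstructed exactly what the cited reference does.
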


All additional information about strong metric dimension up to 2014 year can be seen in survey paper \cite{kra14}.

\subsection{Edge and mixed metric dimension}

Distance between edge $uv$ and vertex $w$ is defined as:

\begin{dfn} \mbox{\rm(\cite{edim1})} \label{diste} $d(uv,w)=min\{d(u,w),d(v,w)\}$.
\end{dfn}

The edge metric dimension graph invariant was introduced by Kelenc et al. (2018) in \cite{edim1}:

\begin{dfn} \mbox{\rm(\cite{edim1})} \label{edimd} A vertex $w$ resolves two edges $e_1$ and $e_2$ if $d(e_1,w) \ne d(e_2,w)$. A vertex set $S$ of $G$ is a {\em edge resolving set} of $G$ if every two distinct edges from $E(G)$ are resolved by some vertex of $S$. Edge metric basis is edge resolving set of minimal cardinality, and edge metric dimension
of graph G, denoted as $\beta_E(G)$, is cardinality of edge metric basis.
\end{dfn}

The mixed metric dimension graph invariant was introduced by Kelenc et al. (2017) in \cite{mdim1}:

\begin{dfn} \mbox{\rm(\cite{edim1})} \label{mdimd} A vertex $w$ resolves two items $a$ and $b$ ($a,b \in V(G) \cup E(G)$), if $d(a,w) \ne d(b,w)$. A vertex set $S$ of $G$ is a {\em mixed resolving set} of $G$ if every two distinct items (vertices or edges) from $V(G) \cup E(G)$ are resolved by some vertex of $S$. Mixed metric basis is mixed resolving set of minimal cardinality, and mixed metric dimension of graph G, denoted as $\beta_M(G)$, is cardinality of mixed metric basis.
\end{dfn}

Next property describes link between these graph invariants.

\begin{prp} \mbox{\rm(\cite{mdim1})} \label{geq}
For any graph $G$ it holds $\beta_{M}(G)\geq \max\{\beta(G),\beta_{E}(G)\}$.
\end{prp}

Extremal values of edge and mixed metric dimension are given in next five statements:

\begin{prp} \mbox{\rm(\cite{edim1})} \label{elbub}
For any graph $G$ of order $n$, it holds $1 \le \beta_E(G) \le n-1$.
\end{prp}
\begin{prp} \mbox{\rm(\cite{mdim1})} \label{mlbub}
For any graph $G$ of order $n$, it holds $2 \le \beta_M(G) \le n$.
\end{prp}

\begin{prop} \mbox{\rm(\cite{mdim1})} \label{path}
Let G be any graph of order n. Then $\beta_M(G) = 2$ if and only if G is a path.
\end{prop}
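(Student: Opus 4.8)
The plan is to prove the two implications separately, using the universal lower bound $\beta_M(G)\ge 2$ from Proposition~\ref{mlbub} throughout. For the direction ``$G$ a path $\Rightarrow \beta_M(G)=2$'', I would write $G=P_n$ as $v_1v_2\cdots v_n$ and verify that the two endpoints $W=\{v_1,v_n\}$ form a mixed resolving set: each vertex $v_i$ receives the coordinate vector $(d(v_i,v_1),d(v_i,v_n))=(i-1,\,n-i)$ and each edge $v_iv_{i+1}$ receives $(i-1,\,n-i-1)$. Since vertices have coordinate-sum $n-1$ while edges have coordinate-sum $n-2$, and since within each class the first coordinate strictly increases with $i$, all $n+(n-1)$ items get pairwise distinct vectors; hence $\beta_M(P_n)\le 2$, and with the lower bound, equality. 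This direction is routine.

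The substance is the converse. Assume $W=\{x,y\}$ is a mixed resolving set and set $D=d(x,y)$. The first key step is a local criterion: for a vertex $v$ and an incident edge $vu$, a base vertex $w$ resolves the pair $(v,vu)$ iff $d(vu,w)=\min\{d(v,w),d(u,w)\}\ne d(v,w)$, i.e.\ exactly when $d(u,w)<d(v,w)$. Call a neighbor $u$ of $v$ \emph{$x$-close} if $d(u,x)=d(v,x)-1$ and \emph{$y$-close} if $d(u,y)=d(v,y)-1$. Because $W$ must resolve every pair $(v,vu)$, every neighbor of $v$ is $x$-close or $y$-close. I would then translate the remaining resolving requirements into two collision bounds on these classes.

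The crux, and the step I expect to carry the real weight, is ruling out high degree through coordinate collisions of the representation $a\mapsto(d(a,x),d(a,y))$. The decisive observation is that no neighbor can be simultaneously $x$-close and $y$-close: if $d(u,x)=d(v,x)-1$ and $d(u,y)=d(v,y)-1$, then the edge $vu$ has coordinates $(\min\{d(v,x),d(u,x)\},\min\{d(v,y),d(u,y)\})=(d(u,x),d(u,y))$, which coincide with the coordinates of the \emph{vertex} $u$, contradicting that $W$ resolves the item pair $(vu,u)$. Next, $v$ can have at most one $x$-close neighbor, since two $x$-close (hence not $y$-close) neighbors $u_1,u_2$ give edges $vu_1,vu_2$ both with coordinates $(d(v,x)-1,\,d(v,y))$; symmetrically $v$ has at most one $y$-close neighbor. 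Combining the three facts yields $\deg(v)\le 2$ for every $v$.

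Finally I would pin down the global shape. Applying the same argument to $v=x$, no neighbor of $x$ can be $x$-close (nothing is closer to $x$ than $x$ itself), so every neighbor of $x$ is $y$-close and there is at most one; hence $\deg(x)=1$, and likewise $\deg(y)=1$. A connected graph with maximum degree at most $2$ is a path or a cycle, and the presence of a degree-one vertex rules out the cycle, so $G$ is a path. The main obstacle is the converse direction, and within it the bookkeeping that converts the abstract ``all items receive distinct coordinate vectors'' condition into the three concrete statements above; the both-close collision identity, equating the coordinates of $vu$ with those of $u$, is the observation that makes the degree bound fall out cleanly.
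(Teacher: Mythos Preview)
The paper does not give its own proof of this proposition; it is quoted from \cite{mdim1} and used as a black box in the proof of Theorem~\ref{smn8t}. There is therefore nothing in the paper to compare your argument against.

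That said, your proof is correct. The forward direction is routine. For the converse, your three local observations do exactly the work needed: (i) every neighbor of $v$ must be $x$-close or $y$-close because $W$ resolves the pair $(v,vu)$; (ii) no neighbor is both, since otherwise the edge $vu$ and the vertex $u$ would receive the same coordinate vector; (iii) two $x$-close (hence, by (ii), not $y$-close) neighbors would give two incident edges with the identical vector $(d(v,x)-1,\,d(v,y))$. These combine to $\deg(v)\le 2$, and specializing to $v=x$ forces $\deg(x)=1$, ruling out the cycle. The argument is clean and matches in spirit the characterization proved in \cite{mdim1}.
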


\begin{prop} \mbox{\rm(\cite{edim1})} \label{elb} Let $G$ be any graph and let $\triangle(G)$ be the maximum degree of $G$.
Then $\beta_E(G) \geq \lceil\log_2\triangle (G)\rceil$.
\end{prop}

\begin{prop} \mbox{\rm(\cite{fil19})} \label{elb2} Let $G$ be a connected graph and let $\delta(G)$ be the minimum degree of $G$. Then,
$\beta_E(G) \geq 1 + \lceil log_2 \delta(G) \rceil$.\end{prop}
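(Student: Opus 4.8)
The plan is to mirror the counting argument that underlies Proposition \ref{elb}, but to seat the reference vertex \emph{inside} the basis so that one coordinate of the edge-distance vectors is forced to collapse. This collapse is exactly what produces the extra $+1$, while the price we pay — having to use a vertex we are not free to choose — is precisely what forces the minimum degree $\delta(G)$ into the bound rather than the maximum degree.

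Concretely, let $S$ be an edge metric basis of $G$, so that $|S| = \beta_E(G) =: k$, and fix any $w \in S$. Let $e_1, \dots, e_d$ be the edges incident to $w$, where $d = \deg(w) \ge \delta(G)$ and $e_i = w x_i$ with the $x_i$ pairwise distinct. First I would record two elementary distance facts. Since $w$ is an endpoint of each $e_i$, Definition \ref{diste} gives $d(e_i, w) = \min\{0, d(x_i,w)\} = 0$ for every $i$; thus the coordinate associated with $w$ is constant across all these edges and contributes nothing to distinguishing them. For any other $w' \in S$, the triangle inequality applied to the adjacent pair $w, x_i$ gives $d(x_i, w') \in \{d(w,w')-1,\, d(w,w'),\, d(w,w')+1\}$, and taking the minimum with $d(w,w')$ as in Definition \ref{diste} collapses this to $d(e_i, w') \in \{d(w,w')-1,\, d(w,w')\}$ — only two possible values.

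Next I would assemble the distance vectors $\bigl(d(e_i, w')\bigr)_{w' \in S}$ and count. The $w$-coordinate is pinned to $0$, while each of the remaining $k-1$ coordinates takes one of two values, so at most $2^{k-1}$ distinct vectors are available. Because $S$ is an edge resolving set (Definition \ref{edimd}), the $d$ incident edges must receive pairwise distinct vectors, whence $\delta(G) \le d \le 2^{k-1}$. Taking logarithms and using that $k-1$ is an integer then yields $k - 1 \ge \lceil \log_2 \delta(G) \rceil$, i.e. $\beta_E(G) \ge 1 + \lceil \log_2 \delta(G) \rceil$, as required.

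The only genuinely delicate point — and the reason the bound cannot be sharpened to use $\triangle(G)$ — is the commitment to a vertex $w \in S$: the coordinate collapse $d(e_i,w)=0$ is available precisely when $w$ is one of the measuring vertices, but we have no control over which vertices the basis $S$ actually contains, so the best uniform guarantee on $\deg(w)$ is $\delta(G)$. To finish, I would sanity-check the small cases $\delta(G)\in\{1,2\}$ against the trivial bound $\beta_E(G)\ge 1$ from Proposition \ref{elbub} to confirm the statement degrades gracefully, and verify that connectivity is invoked only to guarantee that all the distances appearing above are finite.
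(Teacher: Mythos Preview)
Your argument is correct. The counting is clean: fixing $w\in S$ pins the $w$-coordinate of every incident edge to $0$, and for each of the remaining $k-1$ basis vertices $w'$ the triangle inequality together with Definition~\ref{diste} indeed forces $d(e_i,w')\in\{d(w,w')-1,\,d(w,w')\}$, so at most $2^{k-1}$ vectors are available to distinguish $\deg(w)\ge\delta(G)$ edges. The conclusion $\beta_E(G)\ge 1+\lceil\log_2\delta(G)\rceil$ follows exactly as you wrote.

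As for comparison with the paper: there is nothing to compare. Proposition~\ref{elb2} is quoted from \cite{fil19} without proof; the present paper merely states it as a known result and does not use it anywhere in the new arguments of Section~2. Your proof is in fact the standard one --- it is the same pigeonhole argument that underlies Proposition~\ref{elb}, refined by the observation you highlight: seating the reference vertex inside the basis kills one coordinate outright, at the cost of replacing $\triangle(G)$ by $\delta(G)$ since you cannot choose which vertex lands in $S$.
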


Mixed metric dimension of trees is completely resolved by Kelenc et al. (2017) in \cite{mdim1}:

\begin{prop} \mbox{\rm(\cite{mdim1})} \label{tree} Let $T$ be any tree with $l(T)$ leaves, then $\beta_M(G) = l(T)$.\end{prop}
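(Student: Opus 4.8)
The plan is to prove the two inequalities $\beta_M(T) \geq l(T)$ and $\beta_M(T) \leq l(T)$ separately: the first by exhibiting, for each leaf, a pair of items that \emph{only} that leaf can resolve, and the second by verifying that the set of all leaves is itself a mixed resolving set.

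For the lower bound I would fix a leaf $v$ with its unique neighbour $u$ and examine the pair consisting of the vertex $u$ and the edge $uv$. For any vertex $w \neq v$, every shortest path to $v$ passes through $u$, so $d(w,v) = d(w,u)+1$ and hence $d(w,uv)=\min\{d(w,u),d(w,v)\}=d(w,u)$; thus $w$ does not resolve this pair, since $d(w,u)=d(w,uv)$. Only $w=v$ separates the two items, because $d(v,u)=1 \neq 0 = d(v,uv)$. Consequently every mixed resolving set must contain $v$. Applying this to each of the $l(T)$ leaves forces all leaves into any mixed resolving set, giving $\beta_M(T)\ge l(T)$.

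For the upper bound I would show that the set $L$ of all leaves resolves every pair of distinct items $a,b \in V(T)\cup E(T)$, splitting into three cases: two vertices, two edges, and one vertex together with one edge. The unifying idea, which exploits that $T$ is a tree, is the following: given $a \neq b$, locate a leaf $\ell$ so that the position of one item lies strictly on the unique path from $\ell$ to the other item, which forces a strict distance inequality. Concretely, deleting the relevant vertex or edge splits $T$ into subtrees, each of which contains a leaf of $T$; choosing $\ell$ in the subtree lying on the far side of one item from the other yields $d(\ell,a)\neq d(\ell,b)$. The degenerate situations, in which the item that ought to be nearer is itself a leaf, are handled by taking $\ell$ to be that leaf, since then its self-distance $0$ differs from the positive distance to the other item.

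The main obstacle I anticipate is the bookkeeping in the edge-related cases. After deleting an edge $e_1=x_1y_1$ (or a vertex $v$), I must argue that the relevant component always contains a genuine leaf of $T$, and not merely a leaf of that component, and then verify the distance computation $d(\ell,e)=\min\{d(\ell,x_1),d(\ell,y_1)\}$ resolves to the value at the nearer endpoint, so that the minimum defining the edge-distance behaves predictably. Once the existence of a leaf on the correct side is established and this nearer-endpoint bookkeeping is carried out carefully, each case reduces to the same strict-inequality argument, and combining the two bounds yields $\beta_M(T)=l(T)$.
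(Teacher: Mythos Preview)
The paper does not supply its own proof of this proposition: it is quoted verbatim from \cite{mdim1} as a background result, so there is no in-paper argument to compare against. That said, your proposal is correct and in fact reproduces the standard argument from \cite{mdim1}: the lower bound via the unresolved pair $(u,uv)$ attached to each leaf $v$, and the upper bound by checking that the leaf set resolves all three types of item pairs. The only place to be careful is the mixed case $a=u\in V(T)$, $b=e=xy\in E(T)$ with $u\in\{x,y\}$: if $u=x$, extending to a leaf on the $x$-side gives $d(\ell,u)=d(\ell,e)$ and fails, so you must extend on the $y$-side instead; your outline allows for this but it is worth making explicit when you write it out.
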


It is easy to determine $(\beta_E - \beta_M)(n)$ from facts in the literature.

From Property \ref{geq} it is evident that $\beta_M(G) \geq \beta_E(G)$.
In the literature, it is easy to find graph of order at least 6, where $\beta_M(G) = \beta_E(G)$:

\begin{prp} \mbox{\rm(\cite{edim1})} \label{cbe}
For any complete bipartite graph $K_{r,t}$ different from $K_{1,1}$, it holds $\beta_E(K_{r,t}) = r+t-2$.
\end{prp}

\begin{prp} \mbox{\rm(\cite{mdim1})} \label{cbm}
For any $r,t \ge 3$, it holds $\beta_M(K_{r,t}) = r+t-2$.
\end{prp}

\begin{cor} \mbox{\rm(\cite{mdim1,edim1})} For each $n \ge 6$ it holds $(\beta_E - \beta_M)(n) = 0$.
\end{cor}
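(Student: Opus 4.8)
The plan is to pin down $(\beta_E - \beta_M)(n)$ from two sides. For the upper bound, I would invoke Property~\ref{geq}, which guarantees $\beta_M(G) \geq \beta_E(G)$ for every graph $G$. Consequently $\beta_E(G) - \beta_M(G) \leq 0$ holds for \emph{all} graphs of order $n$, so the maximum of this quantity over all such graphs satisfies $(\beta_E - \beta_M)(n) \leq 0$.

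For the matching lower bound, it suffices to exhibit, for each $n \geq 6$, a single connected graph of order $n$ on which the two invariants coincide. The natural candidate is a complete bipartite graph $K_{r,t}$ with $r + t = n$. Properties~\ref{cbe} and~\ref{cbm} give $\beta_E(K_{r,t}) = r + t - 2$ and $\beta_M(K_{r,t}) = r + t - 2$ respectively, so their difference is exactly $0$ — provided both hypotheses apply, which requires $r, t \geq 3$.

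The remaining task is to realize every order $n \geq 6$ with such parameters. Taking $r = 3$ and $t = n - 3$ works: the constraint $n \geq 6$ gives $t = n - 3 \geq 3$ and $r = 3 \geq 3$, so $K_{3, n-3}$ has order $n$ and satisfies $\beta_E(K_{3,n-3}) = \beta_M(K_{3,n-3}) = n - 2$. Hence $(\beta_E - \beta_M)(n) \geq 0$, and combined with the upper bound this yields $(\beta_E - \beta_M)(n) = 0$.

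I expect no genuine obstacle here; the only point deserving care is the role of the bound $n \geq 6$, which is precisely the threshold below which no complete bipartite graph meets the $r, t \geq 3$ requirement of Property~\ref{cbm}, so this particular construction cannot be pushed to smaller orders.
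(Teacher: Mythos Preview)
Your proof is correct and follows exactly the route the paper intends: the upper bound comes from Property~\ref{geq}, and the lower bound from Properties~\ref{cbe} and~\ref{cbm} applied to a complete bipartite graph $K_{r,t}$ with $r,t\ge 3$ and $r+t=n$. Your explicit choice $K_{3,n-3}$ makes concrete what the paper leaves implicit, and your remark on why $n\ge 6$ is the natural threshold for this construction matches the paper's framing.
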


Up to now, in the literature there are no any results about $(\beta_M - \beta_E)(n)$ or $(\beta_S - \beta_M)(n)$.
Therefore, new results which give the answer to these questions are given in the next section.

\section{New results}

This section is devoted to presentation of new results about $(\beta_M - \beta_E)(n)$ and $(\beta_S - \beta_M)(n)$.
Answer of general case ($n \ge 4$) for first problem is given in Theorem \ref{men4t}, while answer of special case for $n=3$ is presented in Property \ref{men3t}.
The other problem is resolved in Theorem \ref{smn8t} (general case for $n \ge 7$) and Property \ref{smn3p} (special cases for $3 \le n \le 6$).

\subsection{The difference between mixed and edge metric dimension}

As it is written before, first we resolve special case for $n=3$.

\begin{prp} \label{men3t} $(\beta_M - \beta_E)(3) = 1$\end{prp}
\begin{proof}There are only two connected graphs of order 3: path $P_3$ and cycle $C_3$.
From the facts $\beta_M(P_3) = 2$, $\beta_E(P_3) = 1$, $\beta_M(C_3) = 3$ and $\beta_E(C_3) = 2$,
we have $\beta_M(P_3) - \beta_E(P_3) = \beta_M(C_3) - \beta_E(C_3) = 1$.
Therefore, $(\beta_M - \beta_E)(3) = 1$.
\end{proof}

Next,  general case for $n \ge 4$ is resolved.

\begin{thm} \label{men4t} For each $n \ge 4$ it holds $\lfloor \frac{n}{2} \rfloor - 1 \le (\beta_M - \beta_E)(n) \le n-2$\end{thm}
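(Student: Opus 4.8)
The statement packages two essentially independent claims: an upper bound valid for every graph of order $n$, and a lower bound witnessed by a single explicit family. I would prove them separately.

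For the upper bound the plan is to start from $\beta_M(G)\le n$ (Property \ref{mlbub}) and split on the value of $\beta_E(G)$. If $\beta_E(G)\ge 2$, then $\beta_M(G)-\beta_E(G)\le n-2$ immediately. If instead $\beta_E(G)=1$, then Proposition \ref{elb} forces $\lceil\log_2\triangle(G)\rceil\le 1$, hence $\triangle(G)\le 2$, so $G$ is a path or a cycle; since a cycle has edge metric dimension $2$, the graph must be a path, and Proposition \ref{path} then gives $\beta_M(G)=2$, so the difference is $1\le n-2$ for every $n\ge 4$. This disposes of the upper bound with no real computation.

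The lower bound is the substantive part, and the plan is to exhibit one caterpillar $T_n$ per order $n$. For even $n$ I would take the path $u_1u_2\cdots u_k$ with $k=\tfrac{n}{2}+1$ and attach a single pendant leaf to each interior vertex $u_2,\dots,u_{k-1}$; for odd $n$ I would use the corresponding graph on $n-1$ vertices and subdivide one path edge, which raises the order by one without creating a new leaf. In either case $T_n$ is a tree whose leaves are exactly the two path ends together with all pendants, so Proposition \ref{tree} gives $\beta_M(T_n)=\ell(T_n)=\lfloor n/2\rfloor+1$ in both parities.

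It remains to show $\beta_E(T_n)=2$, and this is the step I expect to carry the real weight. Since $T_n$ contains a vertex of degree $3$, Proposition \ref{elb} already gives $\beta_E(T_n)\ge 2$, so I only need to verify that the pair $\{u_1,u_k\}$ of path endpoints is an edge resolving set. I would do this by computing, for each edge $e$, the coordinate pair $(d(e,u_1),d(e,u_k))$ directly from Definition \ref{diste}: every path edge yields a pair whose coordinates sum to the path length minus one, while every pendant edge yields a pair summing to the path length, so path edges are separated from pendant edges by this sum; within each class the first coordinate strictly increases along the path, separating edges of the same class. Granting $\beta_E(T_n)=2$, the difference $\beta_M(T_n)-\beta_E(T_n)=\lfloor n/2\rfloor-1$ in both parities, which is exactly the claimed lower bound. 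The main obstacle is precisely this uniform verification that attaching pendants to the interior of a path never pushes $\beta_E$ above $2$ (in sharp contrast to attaching several leaves at one common vertex, which creates edge-twins and inflates $\beta_E$, as in the star), together with checking that the odd-order subdivision preserves both the leaf count and the two-landmark resolving property.
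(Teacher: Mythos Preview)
Your proposal is correct and follows essentially the same route as the paper: the same caterpillar family (a path with single pendants at interior vertices, with one extra path vertex for odd $n$) witnesses the lower bound via Proposition~\ref{tree} and the pair of path endpoints as edge resolving set, and the upper bound is the same $\beta_M\le n$ together with $\beta_E\ge 2$ unless $G$ is a path or cycle. Your case split on $\beta_E(G)$ versus the paper's split on $\triangle(G)$ is just the contrapositive, and your sum-of-coordinates argument for separating path edges from pendant edges is a slightly cleaner replacement for the paper's explicit coordinate tables, but the substance is identical.
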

\begin{proof} First lower bound will be proved. Let $m = \lfloor \frac{n}{2} \rfloor$ and $T'_n$ is a tree given by $V(T'_n) = \{v_1,v_2,...,v_n\}$ and
$E(T'_n) = \{v_iv_{i+1}\,|\,1 \le i \le n-m\} \bigcup \{v_iv_{n-m+i}\,|\,2 \le i \le m\}$. On Figure \ref{men4s}
trees $T'_8$ and $T'_9$ are presented. It is easy to see that tree $T'_n$ has exactly $m+1$ leaves, so by Proposition \ref{tree} from \cite{mdim1}
it directly follows that $\beta_M(T'_n) = l(T'_n) = m+1$.

Next it will be proved, by checking metric coordinates of each edge, that set $S'_E=\{v_1,v_{n-m+1}\}$ is a edge metric basis.
Indeed, $r(v_iv_{i+1},S'_E) = (i-1,n-m-i)$ for $1 \le i \le n-m$ and $r(v_iv_{n-m+i},S'_E) = (i-1,n+1-m-i)$ for $2 \le i \le m$.
It is easy to see that metric coordinates of all edges are mutually different, so $S'_E$ is an edge resolving set.
Since maximal degree of vertices from $T'_n$ is 3 (for example, $N(v_2)=\{v_1,v_3,v_{n-m+2}\}$),
then by Proposition \ref{elb} from \cite{edim1} it holds $\beta_E(T'_n) \geq \lceil\log_2\triangle (T'_n)\rceil = \lceil\log_23\rceil = 2$.
The fact that set $S'_E$ with cardinality 2 is an edge resolving set for $T'_n$ and $\beta_E(T'_n) \geq 2$, directly implying that
$\beta_E(T'_n) = 2$ and set $S'_E$ is a edge metric basis for $T'_n$.

Therefore, $\beta_M(T'_n) = l(T'_n) = m+1$ and $\beta_E(T'_n) = 2$ implying
$(\beta_M - \beta_E)(n) \ge \beta_M(T'_n) - \beta_E(T'_n) = m-1 = \lfloor \frac{n}{2} \rfloor - 1$,
since $(\beta_M - \beta_E)(n)$ is a maximum over all connected graphs of order $n$.

It is evident that connected graphs of order at least 3, have $\triangle (G) \ge 2$. For proving upper bound, we have two cases for graph $G$:\\
{\em Case 1}: $\triangle (G) \ge 3$\\
By Property \ref{mlbub} from \cite{mdim1} every graph $G$ with order $n$ has mixed metric dimension at most $n$.
Also, by Proposition \ref{elb} from \cite{edim1}, $\beta_E(G) \geq \lceil\log_2\triangle (G)\rceil \ge \lceil\log_23\rceil = 2$.
Therefore, $\beta_M(G) - \beta_E(G) \le n-2$.\\
{\em Case 2}: $\triangle (G) = 2$\\
Only connected graph of order $n \ge 4$, with $\triangle (G) = 2$, i.e. degrees of all vertices are less or equal 2, are: path $P_n$ and cycle
$C_n$. Having in mind that $\beta_E(P_n) = 1$, $\beta_M(P_n) = 2$, $\beta_E(C_n) = 2$ and $\beta_M(C_n) = 3$, then in this case holds $\beta_M(G) - \beta_E(G) \le 1 \le n-2$.
Since in both cases holds $\beta_M(G) - \beta_E(G) \le n-2$, we have $(\beta_M - \beta_E)(n) \le n-2$.
\end{proof}

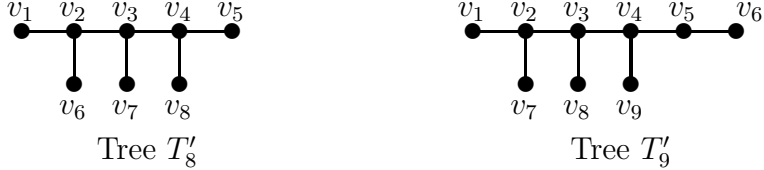
\begin{figure}[htbp]
\begin{center}
\centering\setlength\unitlength{1mm}
\begin{picture}(100,25)
\thicklines
\put(12,0){Tree $T'_8$}
\put(2,17){\circle*{2}}
\put(2,17){\line(1,0){7}}
\put(0,19){$v_1$}
\put(9,17){\circle*{2}}
\put(9,17){\line(1,0){7}}
\put(7,19){$v_2$}
\put(9,10){\circle*{2}}
\put(9,10){\line(0,1){7}}
\put(7,6){$v_6$}
\put(16,17){\circle*{2}}
\put(16,17){\line(1,0){7}}
\put(14,19){$v_3$}
\put(16,10){\circle*{2}}
\put(16,10){\line(0,1){7}}
\put(14,6){$v_7$}
\put(23,17){\circle*{2}}
\put(23,17){\line(1,0){7}}
\put(21,19){$v_4$}
\put(23,10){\circle*{2}}
\put(23,10){\line(0,1){7}}
\put(21,6){$v_8$}
\put(30,17){\circle*{2}}
\put(28,19){$v_5$}

\put(75,0){Tree $T'_9$}
\put(62,17){\circle*{2}}
\put(62,17){\line(1,0){7}}
\put(60,19){$v_1$}
\put(69,17){\circle*{2}}
\put(69,17){\line(1,0){7}}
\put(67,19){$v_2$}
\put(69,10){\circle*{2}}
\put(69,10){\line(0,1){7}}
\put(67,6){$v_7$}
\put(76,17){\circle*{2}}
\put(76,17){\line(1,0){7}}
\put(74,19){$v_3$}
\put(76,10){\circle*{2}}
\put(76,10){\line(0,1){7}}
\put(74,6){$v_8$}
\put(83,17){\circle*{2}}
\put(83,17){\line(1,0){7}}
\put(81,19){$v_4$}
\put(83,10){\circle*{2}}
\put(83,10){\line(0,1){7}}
\put(81,6){$v_9$}
\put(90,17){\circle*{2}}
\put(90,17){\line(1,0){7}}
\put(88,19){$v_5$}
\put(97,17){\circle*{2}}
\put(97,19){$v_6$}

\end{picture}
\caption{Trees $T'_8$ and $T'_9$.}
\label{men4s}
\end{center}
\end{figure}

\subsection{The difference between strong and mixed metric dimension}

As it is written before, first we resolve $(\beta_S - \beta_M)(n)$, for special cases where $3 \le n \le 6$ and that  difference  is presented in the following property.

\begin{prp} \label{smn3p} Exact values, obtained by total enumeration, of $(\beta_M - \beta_E)(n)$, for $3 \le n \le 6$ are given
in Table \ref{smtab}, together with values of $\beta_S(H'_n)$ and $\beta_M(H'_n)$. Extremal graphs $H'_n$ for $3 \le n \le 6$
are presented in Figure \ref{sms}.
\end{prp}

Next, general case for $n \ge 7$ is resolved.

\begin{thm} \label{smn8t} For each $n \ge 7$ it holds $\lfloor \frac{n-1}{2} \rfloor - 2 \le (\beta_S - \beta_M)(n) \le n-4$\end{thm}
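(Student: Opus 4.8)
The plan is to establish the two inequalities separately, mirroring the structure of the proof of Theorem~\ref{men4t}: for the lower bound I exhibit a single explicit family of order $n$ attaining $\lfloor\frac{n-1}{2}\rfloor-2$, and for the upper bound I give a short structural argument valid for every connected graph of order $n$.

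For the lower bound my candidate is the cycle $G=C_n$. The value $\beta_M(C_n)=3$ is the fact already invoked in the proof of Theorem~\ref{men4t} (see \cite{mdim1}), so the work is in bounding $\beta_S(C_n)$ from below. Writing $V(C_n)=\{v_0,\dots,v_{n-1}\}$ with $d(v_i,v_j)=\min\{|i-j|,\,n-|i-j|\}$, I would first determine the mutually maximally distant pairs directly from Definition~\ref{dmmd}: a routine check of the neighbour conditions shows that the MMD pairs are exactly the antipodal pairs $\{v_i,v_{i+n/2}\}$ when $n$ is even, and the two near-antipodal pairs $\{v_i,v_{i+k}\}$, $\{v_i,v_{i+k+1}\}$ (with $n=2k+1$) when $n$ is odd. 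Consequently the graph whose edges are the MMD pairs is a perfect matching on $n$ vertices in the even case and an $n$-cycle in the odd case. By Proposition~\ref{smmd} every strong resolving set must meet each such pair, i.e.\ it is a vertex cover of this auxiliary graph, so $\beta_S(C_n)\ge\lceil n/2\rceil$ (vertex cover $n/2$ of the matching, resp.\ $(n+1)/2$ of the odd cycle). Combining the two values gives $(\beta_S-\beta_M)(n)\ge\beta_S(C_n)-\beta_M(C_n)\ge\lceil n/2\rceil-3=\lfloor\frac{n-1}{2}\rfloor-2$, where the final equality is the parity identity $\lceil n/2\rceil=\lfloor\frac{n-1}{2}\rfloor+1$.

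For the upper bound I would split on whether $G$ is a path. I first record the elementary bound $\beta_S(G)\le n-1$: by Definition~\ref{sdimd} the set $S=V(G)\setminus\{v\}$ is strongly resolving, since $V(G)\setminus S$ is a single vertex and so contains no pair to resolve. If $G=P_n$ then $\beta_S(P_n)=1$ and $\beta_M(P_n)=2$, so the difference is $-1\le n-4$. If $G$ is not a path, then Proposition~\ref{path} (together with the lower bound of Property~\ref{mlbub}) forces $\beta_M(G)\ge3$, whence $\beta_S(G)-\beta_M(G)\le(n-1)-3=n-4$. Taking the maximum over all connected graphs of order $n$ yields $(\beta_S-\beta_M)(n)\le n-4$.

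The only genuinely technical step is the exact determination of the MMD pairs of $C_n$ and the ensuing vertex-cover count; everything else reduces to facts already available in the excerpt. I expect the main care to be needed in separating the even and odd cases, since the MMD graph changes shape (a matching versus an odd cycle) and it is precisely the parity that makes the closed form $\lceil n/2\rceil$ line up with the claimed bound $\lfloor\frac{n-1}{2}\rfloor-2$. I would also verify the boundary against the enumerated values in Table~\ref{smtab} to confirm that $n\ge7$ is the correct threshold for the general argument.
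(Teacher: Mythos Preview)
Your proposal is correct. The upper-bound half is identical to the paper's argument: split on whether $G$ is a path, use $\beta_S(G)\le n-1$ together with Proposition~\ref{path} to force $\beta_M(G)\ge 3$ in the non-path case, and handle $P_n$ directly.

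For the lower bound you take a genuinely different and more economical route. The paper constructs a bespoke family $H'_n$ (the cycle $v_1v_2\cdots v_{n-1}v_1$ together with an extra vertex $v_n$ adjacent to $v_1$ and $v_3$), then spends most of the proof verifying $\beta_M(H'_n)=3$ via explicit coordinate tables for the set $\{v_2,v_m,v_{m+3}\}$, and finally bounds $\beta_S(H'_n)\ge\lfloor\frac{n-1}{2}\rfloor+1$ by an MMD-pair count. You instead use the plain cycle $C_n$: the value $\beta_M(C_n)=3$ is already on record (and indeed is quoted in the proof of Theorem~\ref{men4t}), so no coordinate computation is needed, and your MMD analysis yields $\beta_S(C_n)\ge\lceil n/2\rceil=\lfloor\frac{n-1}{2}\rfloor+1$, exactly matching the paper's bound for $H'_n$. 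The net effect is the same lower bound with substantially less bookkeeping. The only place your argument requires a touch more care than you indicate is the odd case: the MMD graph is the circulant $C_n(k)$ with $n=2k+1$, and one should note $\gcd(k,2k+1)=1$ to conclude it is a single $n$-cycle (rather than a union of shorter cycles) before invoking the vertex-cover value $\lceil n/2\rceil$. With that one line added, your proof is complete.
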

\begin{proof} {\bf Step 1.} $(\beta_S - \beta_M)(n) \le n-4$ \\
Let $G$ is connected graph of order $n$ which is not a path. Then from Proposition \ref{path} from \cite{mdim1} it holds
$\beta_M(G) \ge 3$. For each connected graph $G$  is $\beta_S(G) \le n-1$,
therefore $\beta_S(G) - \beta_M(G) \le n-4$.
Other case, when $G=P_n$ is easy: $\beta_S(P_n)=1$ and $\beta_M(P_n)=2$ so $\beta_S(P_n) - \beta_M(P_n) = -1 \le n-4$.
Since in both cases holds $\beta_S(G) - \beta_M(G)  \le n-4$, then $(\beta_S - \beta_M)(n) \le n-4$.

Now, lower bound will be proved. Let $m = \lfloor \frac{n}{2} \rfloor$ and $H'_n$ is a graph given by $V(H'_n) = \{v_1,v_2,...,v_n\}$ and
$E(H'_n) = \{v_iv_{i+1}\,|\,1 \le i \le n-2\} \bigcup \{v_1v_n,v_3v_n,v_{n-1}v_1\}$. On Figure \ref{sms2}
graph $H'_n$ for $n \ge 7$ is graphically presented.

{\bf Step 2.} $\beta_M(H'_n) = 3$ \\
Let $S''_M=\{v_2,v_m,v_{m+3}\}$. For $n=2m$ metric coordinates of vertices are given in (\ref{n2mv}),
while metric coordinates of edges are given in (\ref{n2me}).

\begin{equation} \label{n2mv}
r(v_i,S''_M) = \begin{cases}
(1,m-1,m-3) & i=1 \\
(i-2,m-i,m-4+i) & 2 \le i \le 3 \\
(i-2,m-i,m+3-i) & 4 \le i \le m \\
(m-1,i-m,m+3-i) & m+1 \le i \le m+2 \\
(2m+1-i,i-m,i-m-3) & m+3 \le i \le 2m-1 \\
(2,m-2,m-2) & i=2m \\
\end{cases}
\end{equation}

\begin{equation} \label{n2me}
r(e,S''_M) = \begin{cases}
(0,m-1-i,m-4+i) & e=v_iv_{i+1}, 1 \le i \le 2 \\
(i-2,m-1-i,m+2-i) & e=v_iv_{i+1}, 3 \le i \le m-1 \\
(m-2,0,2) & e=v_mv_{m+1} \\
(2m-i,i-m,m+2-i) & e=v_iv_{i+1}, m+1 \le i \le m+2 \\
(2m-i,i-m,i-m-3) & e=v_iv_{i+1}, m+3 \le i \le 2m-2 \\
(1,m-1,m-4) & e=v_{2m-1}v_1 \\
(1,m-2,m-3) & e=v_{2m}v_1 \\
(1,m-3,m-2) & e=v_{2m}v_3 \\
\end{cases}.
\end{equation}

For $n=2m+1$ metric coordinates of vertices are given in (\ref{n2m1v}),
while metric coordinates of edges are given in (\ref{n2m1e})

\begin{equation} \label{n2m1v}
r(v_i,S''_M) = \begin{cases}
(2-i,m-1,m-3+i) & 1 \le i \le 2 \\
(i-2,m-i,m+3-i) & 3 \le i \le m \\
(i-2,i-m,m+3-i) & m+1 \le i \le m+2 \\
(2m+2-i,i-m,i-m-3) & m+3 \le i \le 2m \\
(2,m-2,m-1) & i=2m+1 \\
\end{cases}
\end{equation}

\begin{equation} \label{n2m1e}
r(e,S''_M) = \begin{cases}
(0,m-2,m-2) & e=v_1v_2 \\
(0,m-3,m-1) & e=v_2v_3 \\
(i-2,m-1-i,m+2-i) & e=v_iv_{i+1}, 3 \le i \le m-1 \\
(m-2,0,2) & e=v_mv_{m+1} \\
(m-1,i-m,m+2-i) & e=v_iv_{i+1}, m+1 \le i \le m+2 \\
(2m+1-i,i-m,i-m-3) & e=v_iv_{i+1}, m+3 \le i \le 2m-1 \\
(1,m-1,m-3) & e=v_{2m}v_1 \\
(1,m-2,m-2) & e=v_{2m+1}v_1 \\
(1,m-3,m-1) & e=v_{2m+1}v_3 \\
\end{cases}.
\end{equation}

From all of the above, since metric representations of all vertices and edges are mutually different, it follows $S''_M$ is a mixed resolving set of graph $H'_n$,
which means that $\beta_M(H'_n) \le 3$. Since $H'_n$ is connected graph, which is not a path, then by Proposition \ref{path} it
holds $\beta_M(H'_n) > 2$. Having in mind that $\beta_M(H'_n)$ has integer value, it implies $\beta_M(H'_n) = 3$.

{\bf Step 3.} $\beta_S(H'_n) \ge \lfloor \frac{n-1}{2} \rfloor + 1$ \\
{\em Case 1}: $n=2m+1$\\
Pair of vertices $v_1,v_{m+1}$ is mutually maximally distant as in Definition \ref{dmmd}, so by Propostion \ref{smmd}
from \cite{kra12}, at least one vertex from that pair must be in any strong resolving set of graph $H'_n$.
The same fact holds for pairs of vertices $v_i,v_{m+i}$ with $3 \le i \le m$.
Therefore, there is $m-1$ disjoint pair of vertices which one member must be in any strong resolving set
of graph $H'_n$.

For remaining 3 vertices $v_2$, $v_{m+2}$ and $v_{2m+1}$, it holds that all 3 pairs $v_2,v_{m+2}$,
$v_2,v_{2m+1}$ and $v_{m+2},v_{2m+1}$ are maximally distant as in Definition \ref{dmmd}. Therefore,
at least two of three vertices $v_2$, $v_{m+2}$ and $v_{2m+1}$ must be in any strong resolving set
of graph $H'_n$. Consequently, any strong resolving set of graph $H'_n$ must have at least $m+1$
vertices, so  $\beta_S(H'_{2m+1}) \ge m+1 = \lfloor \frac{n-1}{2} \rfloor + 1$.

{\em Case 2}: $n=2m$\\
Simiraly as in previous case we have $m-2$ disjoint pairs of mutually maximally distant
vertices as in Definition \ref{dmmd}: $v_i,v_{m+i}$ for $i=1$ and $3 \le i \le m-1$.
Again, for remaining 3 out of 4 vertices $v_2$, $v_{m+2}$ and $v_{2m}$, it holds that all 3 pairs $v_2,v_{m+2}$,
$v_2,v_{2m}$ and $v_{m+2},v_{2m}$ are maximally distant as in Definition \ref{dmmd}.
Consequently, any strong resolving set of graph $H'_n$ must have at least $m$
vertices, so  $\beta_S(H'_{2m+1}) \ge m = \lfloor \frac{n-1}{2} \rfloor + 1$.

Finally, since for $n \ge 7$ it holds $\beta_S(H'_n) \ge \lfloor \frac{n-1}{2} \rfloor + 1$ and
$\beta_M(H'_n) = 3$, then $(\beta_S - \beta_M)(n) \ge \beta_S(H'_n) - \beta_M(H'_n) \ge \lfloor \frac{n-1}{2} \rfloor - 2$. Therefore, the proof on lower bound is completed.
\end{proof}

\begin{table}
\begin{center}
\caption{$(\beta_S - \beta_M)(n)$ for $3 \le n \le 6$}
\label{smtab}
\begin{tabular}{|c|c|c|c|}
\hline
$n$	& $(\beta_S - \beta_M)(n)$ & $\beta_S(H'_n)$ & $\beta_M(H'_n)$\\
\hline
3 & -1 & 1 & 2\\
 \hline
4 & -1 & 2 & 3\\
 \hline
5 & 0 & 3 & 3\\
 \hline
6 & 0 & 3 & 3\\
\hline
\end{tabular}
\end{center}
\end{table}

\begin{figure}[htbp]
\begin{center}
\centering\setlength\unitlength{1mm}
\begin{picture}(100,40)
\thicklines
\put(9,0){$H'_3$}
\put(4,15){\circle*{2}}
\put(4,15){\line(1,0){7}}
\put(2,11){$v_1$}
\put(11,15){\circle*{2}}
\put(11,15){\line(1,0){7}}
\put(9,11){$v_2$}
\put(18,15){\circle*{2}}
\put(16,11){$v_3$}

\put(34,0){$H'_4$}
\put(29,15){\circle*{2}}
\put(27,11){$v_1$}
\put(36,15){\circle*{2}}
\put(34,11){$v_2$}
\put(43,15){\circle*{2}}
\put(41,11){$v_3$}
\put(36,22){\circle*{2}}
\put(36,22){\line(0,-1){7}}
\put(36,22){\line(-1,-1){7}}
\put(36,22){\line(1,-1){7}}
\put(34,24){$v_4$}

\put(59,0){$H'_5$}
\put(54,15){\circle*{2}}
\put(54,15){\line(1,0){7}}
\put(52,11){$v_1$}
\put(61,15){\circle*{2}}
\put(61,15){\line(1,0){7}}
\put(59,11){$v_2$}
\put(68,15){\circle*{2}}
\put(68,15){\line(0,1){7}}
\put(66,11){$v_3$}
\put(68,22){\circle*{2}}
\put(68,22){\line(-1,0){7}}
\put(66,24){$v_4$}
\put(61,22){\circle*{2}}
\put(61,22){\line(-1,-1){7}}
\put(59,24){$v_5$}

\put(84,0){$H'_6$}
\put(79,15){\circle*{2}}
\put(79,15){\line(1,0){7}}
\put(77,11){$v_1$}
\put(86,15){\circle*{2}}
\put(86,15){\line(1,0){7}}
\put(84,11){$v_2$}
\put(93,15){\circle*{2}}
\put(93,15){\line(0,1){7}}
\put(91,11){$v_3$}
\put(93,22){\circle*{2}}
\put(93,22){\line(-1,0){7}}
\put(91,24){$v_4$}
\put(86,22){\circle*{2}}
\put(86,22){\line(-1,-1){7}}
\put(84,24){$v_5$}
\put(79,22){\circle*{2}}
\put(79,22){\line(1,0){7}}
\put(77,24){$v_6$}
\end{picture}
\caption{Extremal graphs $H'_n$ for $2 \le n \le 6$.}
\label{sms}
\end{center}
\end{figure}
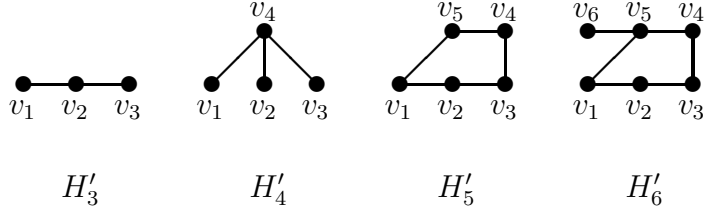

\begin{figure}[htbp]
\begin{center}
\centering\setlength\unitlength{1mm}
\begin{picture}(50,40)
\thicklines
\put(9,0){$H'_n$}
\put(4,22){\circle*{2}}
\put(4,22){\line(1,0){7}}
\put(2,18){$v_1$}
\put(11,22){\circle*{2}}
\put(11,22){\line(1,0){7}}
\put(9,18){$v_2$}
\put(18,22){\circle*{2}}
\put(18,22){\line(1,0){7}}
\put(16,18){$v_3$}
\put(11,15){\circle*{2}}
\put(11,15){\line(-1,1){7}}
\put(11,15){\line(1,1){7}}
\put(9,11){$v_n$}
\put(25,22){\circle*{2}}
\put(25,22){\line(1,0){4}}
\put(31,18){$.\, . \, .$}
\put(23,18){$v_4$}
\put(40,22){\circle*{2}}
\put(40,22){\line(-1,0){4}}
\qbezier(40,22)(22,36)(4,22)
\put(38,18){$v_{n-1}$}
\end{picture}
\caption{Extremal graphs $H'_n$ for $n \ge 7$.}
\label{sms2}
\end{center}
\end{figure}

\section{Conclusions}

This paper is devoted to studying extremal values of the difference between graph invariants related to the metric dimension which were not studied before. First,
the case of difference between mixed and edge metric dimension is resolved. Second, difference between the strong and mixed metric dimension is resolved.
Both questions are answered both for a general case for large $n$, as well as, special cases when $n$ is small.

For a future work, it would be interesting to settle the question about the extremal difference between some metric dimension invariant and its fractional version.
Another direction could be question about the extremal difference between some other graph invariants, which is unsolved up to now.


\end{document}